\newtheorem{thm}{Theorem}[]
\newtheorem{cor}[thm]{Corollary}
\newtheorem{lem}[]{Lemma}
\newtheorem{prop}[]{Proposition}
\theoremstyle{definition}
\newtheorem{defn}[]{Definition}
\theoremstyle{remark}
\newtheorem{rem}[]{Remark}
\numberwithin{equation}{section}
\newcommand{\set}[1]{\left\{#1\right\}}
\newcommand{\Real}{\mathbb R}
\newcommand{\R}{\mathbb{R}}
\newcommand{\ve}{\varepsilon}
\newcommand{\seq}[1]{\left\{ #1 \right\}_{k=0}^{\infty}}
\title[]{Multiplier Sequences for Simple Sets of Polynomials}%
\author{Tam\'as Forg\'acs${}^{\dag}$}
\address{$\dag$- corresponding author \newline \indent Department of Mathematics\newline \indent
California State University, Fresno \newline \indent 5245 North Backer Ave., M/S PB 108 \newline \indent Fresno, CA 93740-8001}
\email{tforgacs@csufresno.edu}
\author{James Tipton${}^{\ddag}$}
\address{$\ddag$-Department of Mathematics\newline \indent
University of Iowa  \newline \indent 14 MacLean Hall \newline \indent Iowa City, IA 52242-1419}
\email{james-tipton@uiowa.edu}
\author{Benjamin Wright${}^{\S}$}
\address{$\S$- Department of Mathematics\newline \indent
California State University, Fresno \newline \indent 5245 North Backer Ave., M/S PB 108 \newline \indent Fresno, CA 93740-8001}
\email{wrightbw@mail.fresnostate.edu}
\thanks{Research partially supported by CURM (NSF grant DMS-063664) and the College of Science and Mathematics at California State University, Fresno.}
\begin{document}
\maketitle

\begin{abstract} In this paper we give a new characterization of simple sets of polynomials $B$ with the property that the set of $B$-multiplier sequences contains all $Q$-multiplier sequence for every simple set $Q$. We characterize sequences of real numbers which are multiplier sequences for every simple set $Q$, and obtain some results toward the partitioning of the set of classical multiplier sequences. {\bf 30C15}\\
\smallskip

{\it Keywords:} partitioning multiplier sequences, simple sets of polynomials
\end{abstract}

% ----------------------------------------------------------------

\section{Introduction} In their seminal work \cite{PS}  P\'olya and Schur completely characterized all sequences of real numbers $\{\gamma_k\}_{k=0}^{\infty}$ satisfying the following property. 

\medskip

\noindent {\bf Property A.} Given any real polynomial
\[
f(x)=\sum_{k=0}^n a_k x^k
\]  
with only real zeros, the polynomial
\[
\Gamma[f(x)]=\sum_{k=0}^n a_k\gamma_k x^k
\]
also has only real zeros. 

\medskip

\noindent Such sequences are called classical multiplier sequences (of the first kind), where the word `classical' refers to the classical/standard basis for the polynomial ring $\R[x]$. Since the late $19^{th}$ century there has been quite a bit of work done in the area of multiplier sequences. Early contributions were made by C. Hermite, E.N. Laguerre, J. Jensen, G. P\'olya, J. Shur, and P. Tur\'an, while Bleecker, T. Craven, G. Csordas conducted most of their research in this area in the late $20^{th}$ and early $21^{st}$ century. For a list of papers highlighting the contributions of these mathematicians to the theory of multiplier sequences, we refer the reader to the extensive bibliography of \cite{andrzej}. 
\newline \indent A natural question arising from the study of classical multiplier sequences is the following: which sequences of real numbers $\{\gamma_k\}_{k=0}^{\infty}$ possess the analog of Property A, where we expand our polynomials in a basis different from the standard one? In \cite{andrzej} Piotrowski characterized all multiplier sequences for the Hermite and generalized Hermite (or $\mathcal{H}^{(\alpha)})$ bases with $\alpha >0$. He also gave a characterization of all bases which share multiplier sequences with the standard basis. We now recall this result.

\begin{defn} Let $Q=\seq{q_k(x)}$ be a set of polynomials. $Q$ is called a {\it simple set of polynomials}, if $\deg q_k(x)=k$ for $k=0,1,2,\ldots$. 
\end{defn}
 
\begin{defn} Let $\{\gamma_k\}_{k=0}^{\infty}$ be a sequence of real numbers, and let $Q=\seq{q_k(x)}$ be a simple set of polynomials.  If 
\[
\Gamma[f(x)]=\sum_{k=0}^n a_k\gamma_k q_k(x)
\]
has only real zeros whenever
\[
f(x)=\sum_{k=0}^n a_k q_k(x)
\]  
has only real zeros, we say that $\{\gamma_k\}_{k=0}^{\infty}$ is a $Q$-multiplier sequence.
\end{defn}
\begin{thm}
[Lemma 157 in \cite{andrzej}] \label{andlem} Let $Q = \{q_k(x)\}^{\infty}_{k=0}$ be a simple set of polynomials. Suppose in addition that $\{c_k\}_{k=0}^{\infty}$ is a sequence of non-zero real numbers, $\alpha \in \R \setminus \{0\}$, and $\beta \in \R$. Let $\widehat{Q}= \{\widehat{q}_k(x)\}_{k=0}^{\infty}$, where we define
\[
\widehat{q}_k(x)=c_k q_k(\alpha x+\beta)	\qquad \qquad (k=0,1,2,...). 
\]
Then $\{\gamma_k\}_{k=0}^{\infty}$ is a $Q$-multiplier sequence if and only if $\{\gamma_k\}_{k=0}^{\infty}$ is a $\widehat{Q}$-multiplier sequence.
\end{thm}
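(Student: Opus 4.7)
The plan is to reduce the theorem to two elementary facts: an affine substitution with nonzero slope preserves real-rootedness, and the transformation sending $Q$ to $\widehat{Q}$ is invertible inside the same family of admissible transformations. Once these are noted, the biconditional follows by direct bookkeeping.

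First, I would observe that if $f(x) = \sum_{k=0}^n a_k q_k(x)$, then
\[
f(\alpha x + \beta) = \sum_{k=0}^n a_k q_k(\alpha x + \beta) = \sum_{k=0}^n \frac{a_k}{c_k}\, \widehat{q}_k(x),
\]
so expansions in $Q$ and in $\widehat{Q}$ are related by the invertible linear change of coefficients $a_k \mapsto a_k/c_k$, realized on the polynomial level by the affine substitution $f(x) \mapsto f(\alpha x + \beta)$. Because $\alpha \neq 0$, the roots of $f(\alpha x + \beta)$ are exactly $(\zeta - \beta)/\alpha$ as $\zeta$ ranges over the roots of $f$; in particular, $f$ has only real zeros if and only if $f(\alpha x + \beta)$ does.

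For the forward direction, assume $\{\gamma_k\}_{k=0}^\infty$ is a $Q$-multiplier sequence and let $g(x) = \sum_{k=0}^n b_k \widehat{q}_k(x)$ have only real zeros. Setting $h(y) = \sum_{k=0}^n b_k c_k q_k(y)$, one has $g(x) = h(\alpha x + \beta)$, whence $h$ is real-rooted. The hypothesis on $\{\gamma_k\}_{k=0}^\infty$ then yields that $\sum_{k=0}^n b_k c_k \gamma_k q_k(y)$ has only real zeros, and substituting $y = \alpha x + \beta$ gives $\sum_{k=0}^n b_k \gamma_k \widehat{q}_k(x)$, as required. The reverse direction is symmetric: $Q$ is recovered from $\widehat{Q}$ via the parameters $(\alpha^{-1}, -\beta/\alpha, 1/c_k)$, which again satisfy the hypotheses.

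I do not anticipate any serious obstacle; the result is structural, saying merely that basis rescalings and invertible affine substitutions act trivially on the $Q$-multiplier property. The one point worth flagging is that we use $c_k \neq 0$ in order to invert the coefficient rescaling, and $\alpha \neq 0$ in order to invert the substitution --- so both hypotheses are genuinely essential, and the symmetric role they play is what makes the bi-implication immediate rather than requiring a separate argument in each direction.
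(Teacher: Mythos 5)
Your argument is correct and complete: the key observations (that $\alpha\neq 0$ makes the substitution $x\mapsto \alpha x+\beta$ a bijection on real-rooted polynomials of each degree, that $c_k\neq 0$ makes the coefficient rescaling invertible, and that the inverse transformation has the same form with parameters $(\alpha^{-1},-\beta/\alpha,1/c_k)$) are exactly what is needed, and the bookkeeping in both directions checks out. The paper itself states this result as Lemma 157 quoted from Piotrowski's dissertation and gives no proof, but your argument is the standard one for this lemma, so there is nothing further to compare.
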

It follows that the only simple sets of polynomials which share multiplier sequences with the standard basis are those obtained from the standard basis by affine transformations as described in Lemma \ref{andlem}. 
\newline \indent Our paper pursues two avenues of investigation. In Section \ref{equivalencethm} we develop an alternative characterization of bases which share multiplier sequences. Instead of affine transformations (as in Lemma \ref{andlem}), our characterization uses the existence of certain geometric multiplier sequences for two given bases. In Section \ref{partitions} we investigate the question whether or not, and in what sense, the set of classical multiplier sequences can be partitioned. We conclude the paper with a list of open problems in Section \ref{concl}.

\section{An equivalence theorem}\label{equivalencethm}
We begin with two simple results.
\begin{lem}\label{powers}
Let $B = \{b_k(x)\}_{k=0}^\infty$ be any simple set of polynomials, and suppose $\{\gamma_k\}_{k=0}^\infty$ is a $B$-multiplier sequence. Then $\{\gamma_k^m\}_{k=0}^\infty$ is also a $B$-multiplier sequence for all $m\in\mathbb{N}.$
\end{lem}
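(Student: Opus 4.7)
The plan is a short induction on $m$, based on the observation that the composition of two $B$-multiplier sequence operators acts on $B$-coefficients by pointwise multiplication.

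First I would set up the base case: $m=1$ is just the hypothesis. For the inductive step, I assume $\{\gamma_k^m\}_{k=0}^\infty$ is a $B$-multiplier sequence and aim to show $\{\gamma_k^{m+1}\}_{k=0}^\infty$ is as well. Given any $f(x)=\sum_{k=0}^n a_k b_k(x)$ with only real zeros, I first apply the operator associated to $\{\gamma_k\}$ to obtain
\[
g(x)=\sum_{k=0}^n a_k\gamma_k b_k(x),
\]
which has only real zeros by the base hypothesis. The polynomial $g$ is now expanded in the basis $B$ with coefficients $a_k\gamma_k$, so I may apply the inductive hypothesis for $\{\gamma_k^m\}$ to $g$, yielding
\[
\sum_{k=0}^n (a_k\gamma_k)\gamma_k^m\, b_k(x)=\sum_{k=0}^n a_k\gamma_k^{m+1}b_k(x),
\]
a polynomial with only real zeros. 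This closes the induction.

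The only step that requires any care is the second application, since one must verify that the intermediate polynomial $g(x)$ really does have only real zeros before feeding it into the $\{\gamma_k^m\}$ operator; this is precisely the content of the $B$-multiplier property for $\{\gamma_k\}$. There is no genuine obstacle here; the lemma is essentially the statement that the set of $B$-multiplier sequences is closed under pointwise multiplication, specialized to products of a sequence with itself.
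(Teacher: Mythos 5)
Your induction is correct and is just a formalization of the paper's own argument, which applies the sequence $\{\gamma_k\}$ to $f$ repeatedly $m$ times and observes that the $B$-coefficients get multiplied by $\gamma_k$ at each stage. Same idea, same mechanism; your write-up merely makes the iteration explicit as an induction.
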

\begin{proof}
Let $f(x) = \sum_{k=0}^nc_kb_k(x)$ be a polynomial with only real zeros. Since $\{\gamma_k\}_{k=0}^\infty$ is a $B$-multiplier sequence, $m$ consecutive application of this sequence yields the polynomial 
\[
p_m(x):=\sum_{k=0}^n\gamma_k^mc_kb_k(x)
\]
with only real zeros. Hence $\{\gamma_k^m\}_{k=0}^\infty$ is a $B$-multiplier sequence for all $m\in\mathbb{N}$.
\end{proof}

\begin{lem} \label{changebasis}
Let $Q = \{q_k\}_{k=0}^\infty$ and $B = \{b_k\}_{k=0}^\infty$ be simple sets of polynomials. Suppose $\alpha>1$, and suppose that $\displaystyle{\left \{\alpha^k\right \}_{k=0}^\infty}$ and $\displaystyle{\left \{\left(1/\alpha\right)^k\right\}_{k=0}^\infty}$ are both $B$-multiplier sequences. Write $q_k(x) = \sum_{j=0}^ka_{k,j}b_j(x)$, and define $\widehat{Q} := \{\widehat{q}_k\}_{k=0}^\infty$ where $\widehat{q}_k = \sum_{j=0}^k \alpha^j a_{k,j}b_j(x)$. Then every $Q$-multiplier sequence is also a $\widehat{Q}$-multiplier sequence.
\end{lem}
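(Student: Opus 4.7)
The plan is to exploit the interplay between the two bases via the assumed reciprocal geometric sequences $\{\alpha^k\}$ and $\{(1/\alpha)^k\}$, which together effectively let us rescale the $b_j$-coefficients of any polynomial without disturbing the reality of its zeros. Concretely, let $\{\gamma_k\}_{k=0}^{\infty}$ be a $Q$-multiplier sequence and take an arbitrary
\[
\widehat{f}(x)=\sum_{k=0}^n c_k \widehat{q}_k(x)
\]
with only real zeros. I first want to show the companion polynomial $f(x)=\sum_{k=0}^n c_k q_k(x)$ also has only real zeros.

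To do this, expand $\widehat{f}$ in the basis $B$:
\[
\widehat{f}(x)=\sum_{j=0}^n \alpha^{j}\Bigl(\sum_{k=j}^n c_k a_{k,j}\Bigr) b_j(x)=\sum_{j=0}^n \alpha^{j} d_j\, b_j(x),
\]
where $d_j=\sum_{k=j}^n c_k a_{k,j}$ are precisely the $B$-coefficients of $f(x)$. Since $\{(1/\alpha)^j\}_{j=0}^{\infty}$ is a $B$-multiplier sequence, applying it to $\widehat{f}$ yields $\sum_j d_j b_j(x)=f(x)$, so $f$ has only real zeros.

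Now apply the $Q$-multiplier sequence $\{\gamma_k\}$ to $f$: the polynomial $g(x):=\sum_{k=0}^n \gamma_k c_k q_k(x)$ has only real zeros. Re-expanding $g$ in the basis $B$ and swapping the order of summation gives
\[
g(x)=\sum_{j=0}^n\Bigl(\sum_{k=j}^n \gamma_k c_k a_{k,j}\Bigr)b_j(x)=\sum_{j=0}^n e_j\, b_j(x),
\]
with $e_j=\sum_{k\ge j}\gamma_k c_k a_{k,j}$. Finally, since $\{\alpha^j\}_{j=0}^{\infty}$ is a $B$-multiplier sequence, $\sum_{j=0}^n \alpha^{j} e_j\, b_j(x)$ has only real zeros; but this polynomial equals
\[
\sum_{k=0}^n \gamma_k c_k \sum_{j=0}^k \alpha^{j} a_{k,j} b_j(x)=\sum_{k=0}^n \gamma_k c_k \widehat{q}_k(x),
\]
which is exactly the result of applying $\{\gamma_k\}$ to $\widehat{f}$ in the basis $\widehat{Q}$. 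So $\{\gamma_k\}$ is a $\widehat{Q}$-multiplier sequence.

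The work really amounts to a bookkeeping exercise; the only subtle point, and what I would treat as the conceptual key, is recognizing that the passage $Q\to\widehat{Q}$ is \emph{exactly} the operation ``multiply the $B$-coefficients by $\alpha^{j}$'', so having both $\{\alpha^{j}\}$ and $\{(1/\alpha)^{j}\}$ as $B$-multiplier sequences allows one to move between the $Q$- and $\widehat{Q}$-worlds in either direction while preserving the real-rootedness hypothesis needed to invoke the $Q$-multiplier property of $\{\gamma_k\}$. No extra hypotheses (such as $\alpha>1$) appear to be needed for the argument itself; the assumption is simply part of the setup ensuring the two geometric sequences are genuinely distinct.
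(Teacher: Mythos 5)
Your proof is correct and is essentially identical to the paper's: expand $\widehat{f}$ in $B$, apply $\{(1/\alpha)^j\}$ to recover $f=\sum c_k q_k$, apply $\{\gamma_k\}$, then apply $\{\alpha^j\}$ to land back in the $\widehat{Q}$-expansion. Your closing observation that $\alpha>1$ plays no role in the argument itself is also accurate.
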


\begin{proof}
Let $\{\gamma_k\}_{k=0}^\infty$ be a Q-multiplier sequence and let $f(x) = \sum_{k=0}^n c_k\widehat{q}_k(x)$ be a polynomial with only real zeros. After substituting in the expressions for the $\hat{q}_k$s we obtain

\[ 
f(x) = \sum_{k=0}^n \sum_{j=0}^k c_k\alpha^ja_{k,j}b_j(x) = \sum_{j=0}^n \alpha^jb_j(x)\left(\sum_{k=j}^nc_ka_{k,j}\right). 
\]

Since $\displaystyle{\left\{\left(\frac{1}{\alpha}\right)^k\right\}_{k=0}^\infty}$ is a $B$-multiplier sequence, the function
\begin{eqnarray*}
 f^*(x) &=& \sum_{j=0}^n\left(\frac{1}{\alpha}\right)^j\alpha^jb_j(x)\left(\sum_{k=j}^nc_ka_{k,j}\right) = \sum_{j=0}^n b_j(x)\sum_{k=j}^nc_ka_{k,j} = \sum_{k=0}^n \sum_{j=0}^k c_ka_{k,j}b_j(x) \\ &=& \sum_{k=0}^nc_kq_k(x)
\end{eqnarray*}
also has only real zeros. Applying the Q-multiplier sequenc $\{\gamma_k\}_{k=0}^\infty$ to $f^*(x)$ gives
\[
\sum_{k=0}^n\gamma_kc_kq_k(x) = \sum_{k=0}^n \sum_{j=0}^k \gamma_kc_ka_{k,j}b_j(x) = \sum_{j=0}^n b_j(x)\sum_{k=j}^n\gamma_kc_ka_{k,j},
\]
which also has only real zeros. Finally, since $\displaystyle{\left \{\alpha^k\right \}_{k=0}^\infty}$ is a $B$-multiplier sequence,
\[
\sum_{j=0}^n \alpha^jb_j(x)\sum_{k=j}^n\gamma_kc_ka_{k,j} = \sum_{k=0}^n \sum_{j=0}^n \gamma_kc_k\alpha^ja_{k,j}b_j(x) = \sum_{k=0}^n\gamma_kc_k\widehat{q}_k(x)
\]
has only real zeros, establishing that $\{\gamma_k\}_{k=0}^\infty$ is a $\widehat{Q}$-multiplier sequence. The proof is complete.
\end{proof}

We are now in position to prove our main theorem.
\begin{thm} \label{classicalequivalence} Let $B\,=\,\{b_k(x)\}_{k=0}^\infty$ and $Q = \{q_k(x)\}_{k=0}^\infty$ be simple sets of polynomials. If there exists an $\alpha>1$ such that both $\{\alpha^k\}_{k=0}^{\infty}$ and $\{\left(\frac{1}{\alpha}\right)^k\}_{k=0}^\infty$ are $B$-multiplier sequences, then every $Q$-multiplier sequence is also a $B$-multiplier sequence. 
\end{thm}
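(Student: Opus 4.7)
The plan is to use Lemmas \ref{powers} and \ref{changebasis} to build a family of simple sets $\widehat{Q}^{(m)}$ indexed by $m\in\mathbb{N}$ with the property that every $Q$-multiplier sequence is also a $\widehat{Q}^{(m)}$-multiplier sequence, and then to show that the $\widehat{Q}^{(m)}$-action of a fixed $Q$-multiplier sequence $\{\gamma_k\}$ on a polynomial $f(x)=\sum c_k b_k(x)$ tends, as $m\to\infty$, to the desired $B$-action $\sum c_k\gamma_k b_k(x)$. A Hurwitz-type limiting argument then delivers the conclusion.

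Concretely, fix $\alpha>1$ as in the hypothesis. By Lemma \ref{powers}, both $\{\alpha^{mk}\}_{k=0}^\infty$ and $\{\alpha^{-mk}\}_{k=0}^\infty$ are $B$-multiplier sequences for every $m\in\mathbb{N}$, so Lemma \ref{changebasis} (applied with $\alpha^m$ in place of $\alpha$) shows that, writing $q_k(x)=\sum_{j=0}^k a_{k,j}b_j(x)$ and setting $\widehat{q}_k^{(m)}(x):=\sum_{j=0}^k \alpha^{mj}a_{k,j}b_j(x)$, any $Q$-multiplier sequence $\{\gamma_k\}$ is also a $\widehat{Q}^{(m)}$-multiplier sequence. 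Now fix a polynomial $f(x)=\sum_{k=0}^n c_kb_k(x)$ with only real zeros. Since the simple set $\widehat{Q}^{(m)}$ is a basis, $f$ has a unique expansion $f(x)=\sum_{k=0}^n d_k^{(m)}\widehat{q}_k^{(m)}(x)$, and the polynomial
\[
g^{(m)}(x):=\sum_{k=0}^n \gamma_k d_k^{(m)}\widehat{q}_k^{(m)}(x)=\sum_{j=0}^n \alpha^{mj}\Bigl(\sum_{k=j}^n\gamma_k d_k^{(m)}a_{k,j}\Bigr)b_j(x)
\]
has only real zeros for every $m$.

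The main step is to show that the $j$-th $B$-coefficient of $g^{(m)}$ converges to $c_j\gamma_j$ as $m\to\infty$. In matrix language, let $T$ be the upper-triangular matrix with entries $T_{j,k}=a_{k,j}$, let $D_\gamma$ be the diagonal matrix with entries $\gamma_k$, and let $D_m$ be the diagonal matrix with entries $\alpha^{mj}$. The defining relation $f=\sum d_k^{(m)}\widehat{q}_k^{(m)}$ reads $T D_m d^{(m)}=c$, while the $B$-coefficient vector of $g^{(m)}$ is $D_m T D_\gamma d^{(m)}=D_m(TD_\gamma T^{-1})D_m^{-1}c$. The matrix $TD_\gamma T^{-1}$ is upper triangular with $\gamma_k$ on its diagonal, and conjugation by $D_m$ multiplies its $(i,j)$ entry by $\alpha^{m(i-j)}$: for $i<j$ this tends to $0$, while for $i=j$ it is unchanged. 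Hence $D_m(TD_\gamma T^{-1})D_m^{-1}\to D_\gamma$, and the $B$-coefficients of $g^{(m)}$ converge to those of $\Gamma[f](x):=\sum_{k=0}^n c_k\gamma_k b_k(x)$.

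To conclude, coefficient-wise convergence inside a fixed finite-dimensional polynomial space coincides with uniform convergence on compact subsets of $\mathbb{C}$. If $\Gamma[f]\equiv 0$ there is nothing to prove; otherwise Hurwitz's theorem rules out any non-real zero $z_0$ of $\Gamma[f]$, for a small disk around $z_0$ disjoint from $\mathbb{R}$ would eventually contain a zero of $g^{(m)}$, contradicting the real-rootedness of $g^{(m)}$. Thus $\Gamma[f]$ has only real zeros, and $\{\gamma_k\}$ is a $B$-multiplier sequence. The main obstacle I anticipate is packaging the linear-algebra conjugation argument cleanly; once the coefficient convergence is established, the Hurwitz step is routine.
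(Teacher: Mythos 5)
Your proposal is correct and follows essentially the same route as the paper: both use Lemma \ref{powers} and Lemma \ref{changebasis} to produce the rescaled simple sets $\widehat{Q}^{(m)}$, show that the image of $f$ under $\{\gamma_k\}$ in those bases converges coefficientwise (hence locally uniformly) to $\sum_k c_k\gamma_k b_k(x)$, and finish with Hurwitz's theorem; your matrix conjugation $D_m(TD_\gamma T^{-1})D_m^{-1}\to D_\gamma$ is just a tidier packaging of the paper's Claims 1--3. One small slip: the defining relation should read $D_mT\,d^{(m)}=c$ rather than $TD_m\,d^{(m)}=c$ (the $j$-th $B$-coefficient of $\widehat{q}_k^{(m)}$ is $\alpha^{mj}a_{k,j}=(D_mT)_{j,k}$), which is in fact what your subsequent formula uses.
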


\begin{rem}\label{allclassical}  Before we prove the theorem, we point out that in the statement of the theorem, the simple set $Q$ is completely arbitrary. Thus if an $\alpha>1$ as in the statement of the theorem exists, then {\it every $Q$-multiplier sequence for every simple set $Q$} is a $B$-multiplier sequence.
\end{rem}
\begin{proof} [Proof (of Theorem \ref{classicalequivalence})]

\noindent For each $k=0,1,2,3,...$ we can choose $a_{k,j} \in \Real$ such that
\begin{eqnarray*}
q_k(x)\,&=&\,\sum_{j=0}^ka_{k,j}b_j(x).
\end{eqnarray*}
Suppose that $\alpha>1$ is as in the statement of the theorem, and define 
\[
\alpha_{\ell} := \alpha^{\ell} \qquad \forall\ell \in \mathbb{N}.
\]
From Lemma \ref{powers} we know that $\{\alpha_{\ell}^k\}_{k=0}^\infty$ is a $B$-multiplier sequence for all $\ell\in\mathbb{N}$. Suppose $\{\gamma_k\}_{k=0}^\infty$ is a $Q$-multiplier sequence. By Lemma \ref{changebasis} $\{\gamma_k\}_{k=0}^\infty$ is also a multiplier sequence for each of the sets
\[
Q_{\alpha_{\ell}} = \left \{q_k^{\alpha_{\ell}}(x)\right\}_{k=0}^\infty = \left\{\sum_{j=0}^k \alpha_{\ell}^j a_{k,j}b_j(x)\right\}_{k=0}^\infty.
\]
By Lemma \ref{andlem}, $\{\gamma\}_{k=0}^\infty$ is then also a multiplier sequence for each of the sets
\[
Q_{\alpha_{\ell}}^*:=\left \{p_k^{\alpha_{\ell}}\right\}_{k=0}^\infty=\left\{\dfrac{1}{\alpha_{\ell}^k a_{k,k}}q_k^{\alpha_{\ell}}(x)\right\}_{k=0}^\infty=\left \{\sum_{j=0}^k\dfrac{a_{k,j}b_j(x)}{a_{k,k}\alpha_{\ell}^{k-j}}\right \}_{k=0}^\infty.
\]

\noindent Suppose now that $f(x)\,=\,\sum_{k=0}^nm_kb_k(x)$ has only real zeros.  For each $\ell\in\mathbb{N}$ we can expand $f$ in the basis $Q_{\alpha_{\ell}}^*$:\\
\begin{displaymath}
f(x)\,=\,\sum_{k=0}^nc_{{\alpha_{\ell}},k}p_k^{\alpha_{\ell}}(x).
\end{displaymath}
Since $\{\gamma_k\}_{k=0}^{\infty}$ is a $Q_{\alpha_{\ell}}^*$-multiplier sequence for every $\ell$, $f_{\alpha_{\ell}}(x)\,:=\,\sum_{k=0}^nc_{{\alpha_{\ell}},k}\gamma_kp_k^{\alpha_{\ell}}(x)$ has only real zeros for each $\ell \in \mathbb{N}$.

\medskip

 \noindent The three claims constituting the remainder of the proof are dedicated to showing that 
\[
f_{\alpha_{\ell}}(x) \longrightarrow \sum_{k=0}^nm_k\gamma_k{b_k(x)}
\]
locally uniformly. This fact, together with (i) the fact that each $f_{\alpha_{\ell}}(x)$ has only real zeros and (ii) Hurwitz' theorem, will establish that $\sum_{k=0}^nm_k\gamma_k{b_k(x)}$ has only real zeros, and hence $\{\gamma_k\}_{k=0}^{\infty}$ is a B-multiplier sequence.

\medskip

\noindent \underline{{\bf Claim 1}} \quad As $\ell \to \infty$, $p_k^{\alpha_{\ell}}$ converges locally uniformly to ${b_k(x)}$ for every $k=0,1,2, \ldots, n$.

\noindent {\bf Reason:} \quad Let $K$ be a compact subset of $\mathbb{C}$, let $\epsilon>0$ be given and set $\displaystyle{R=\max_{k,\ x \in K}|b_k(x)|}$. We calculate
\begin{eqnarray*}
|p_k^{\alpha_{\ell}}-{b_k(x)}|&=&\left|\sum_{j=0}^k\dfrac{a_{k,j}{b_j(x)}}{a_{k,k}{\alpha_{\ell}}^{k-j}}-{b_k(x)}\right|\\
&=&\left|{b_k(x)}+\sum_{j=0}^{k-1}\dfrac{a_{k,j}{b_j(x)}}{a_{k,k}{\alpha_{\ell}}^{k-j}}-{b_k(x)}\right|\\
&=&\left|\sum_{j=0}^{k-1}\dfrac{a_{k,j}{b_j(x)}}{a_{k,k}{\alpha_{\ell}}^{k-j}}\right|\\
&\leq&\sum_{j=0}^{k-1}\left|\dfrac{a_{k,j}}{a_{k,k}{\alpha_{\ell}}^{k-j}}\right||{b_j(x)}|\\
&\leq&\sum_{j=0}^{k-1}\left|\dfrac{a_{k,j}}{a_{k,k}{\alpha_{\ell}}^{k-j}}\right|R^j<\epsilon \qquad \forall x \in K, \quad \ell>>1
\end{eqnarray*}
since $\alpha_{\ell} \rightarrow \infty$ as $\ell \rightarrow \infty$. This establishes Claim 1.

\medskip

\noindent \underline{{\bf Claim 2}} \quad $\displaystyle{\lim_{\ell \to \infty} c_{\alpha_{\ell},k}=m_k}$ for all $k=0,1,2,3,\ldots,n$.

\noindent {\bf Reason:} \quad By expanding $f(x)$ in the $Q_{\alpha_{\ell}}^*$ and $B$ bases we get
\[
\sum_{k=0}^nc_{{\alpha_{\ell}},k}p_{\alpha_{\ell}}^k(x)=\sum_{k=0}^nm_k{b_k(x)},
\]
which, after writing the $p_{\alpha_{\ell}}^k(x)$s in terms of the $b_k(x)$s, gives
\[
(\dag) \quad \sum_{k=0}^nc_{{\alpha_{\ell}},k}\left(\sum_{j=0}^k\dfrac{a_{k,j}b_j(x)}{a_{k,k}\alpha_{\ell}^{k-j}} \right)=\sum_{k=0}^nm_k{b_k(x)}.
\]
Note that the coefficients of the $n^{th}$ degree terms on either side of this equation are $c_{\alpha_{\ell},n}$ and $m_n$ respectively. Thus we must in fact have $c_{\alpha_{\ell},n}=m_n$.
Using this, we rewrite $(\dag)$ as 
\[
m_n\left(\sum_{j=0}^n\dfrac{a_{n,j}b_j(x)}{a_{n,n}\alpha_{\ell}^{n-j}} \right)+\sum_{k=0}^{n-1}c_{{\alpha_{\ell}},k}\left(\sum_{j=0}^k\dfrac{a_{k,j}b_j(x)}{a_{k,k}\alpha_{\ell}^{k-j}} \right)=m_nb_n(x)+\sum_{k=0}^{n-1}m_k{b_k(x)}.
\]
We now look at the coefficients of the degree $n-1$ terms, and conclude that
\[
m_n \frac{a_{n,n-1}}{a_{n,n} \alpha_{\ell}}+c_{\alpha_{\ell},n-1}=m_{n-1},
\]
which implies that $\displaystyle{\lim_{\ell \to \infty} c_{\alpha_{\ell},n-1}=m_{n-1}}$. Continuing in this fashion until the process terminates we get that
\[
\lim_{\ell \to \infty} c_{\alpha_{\ell},k}=m_k \qquad  (k=0,1,2,3,\ldots,n)
\]
as desired.

\medskip

\noindent \underline{{\bf Claim 3}} $f_{\alpha_{\ell}}(x)$ converges locally uniformly to $\sum_{k=0}^nm_k\gamma_k{b_k(x)}$ as $\ell \to \infty$.

\noindent {\bf Reason:} \quad Let $K$ be a compact subset of $\mathbb{C}$, and let $\epsilon>0$ be given.
\begin{eqnarray*}
\left|f_{\alpha_{\ell}}(x)-\sum_{k=0}^nm_k\gamma_k{b_k(x)}\right|&=&\left|\sum_{k=0}^nc_{{\alpha_{\ell}},k}\gamma_kp_k^{\alpha_{\ell}}(x)-\sum_{k=0}^nm_k\gamma_k{b_k(x)}\right|\\
&=&\left|\sum_{k=0}^nc_{{\alpha_{\ell}},k}\gamma_kp_k^{\alpha_{\ell}}(x)-m_k\gamma_k{b_k(x)}\right|\\
&\leq&C\sum_{k=0}^n\left|c_{{\alpha_{\ell}},k}p_k^{\alpha_{\ell}}(x)-m_k{b_k(x)}\right|,
\end{eqnarray*}
where $\displaystyle{C=\max_k \{|\gamma_0|, |\gamma_1|, \ldots, |\gamma_n| \}}$. In addition,
\begin{eqnarray*}
\left|c_{{\alpha_{\ell}},k}p_k^{\alpha_{\ell}}(x)-m_k{b_k(x)}\right|&\leq& \left|c_{{\alpha_{\ell}},k}p_k^{\alpha_{\ell}}(x)-c_{{\alpha_{\ell}},k}b_k(x)\right|+\left|c_{{\alpha_{\ell}},k}b_k(x)-m_k{b_k(x)}\right|\\
&=&\left|c_{{\alpha_{\ell}},k}\right| \left|p_k^{\alpha_{\ell}}(x)-b_k(x)\right|+\left|c_{{\alpha_{\ell}},k}-m_k\right| \left|{b_k(x)}\right|\\
&< & \frac{\epsilon}{n} \qquad (\ell \gg 1)
\end{eqnarray*}
as a consequence of Claims 1 \& 2. Thus, for all $x \in K$, and $\ell \gg 1$, we have
\[
\left|f_{\alpha_{\ell}}(x)-\sum_{k=0}^nm_k\gamma_k{b_k(x)}\right| <\epsilon,
\]
which establishes Claim 3, and completes the proof of Theorem \ref{classicalequivalence}.

\end{proof}

\begin{rem} \label{simplesetcontHermite} The converse of Theorem \ref{classicalequivalence} is false in general. Consider the containments 
\begin{eqnarray*} \set{ \textrm{Generalized Laguerre multiplier sequences}} &\subsetneq& \set{\textrm{Hermite multiplier sequences}} \\
\set{\textrm{Legendre multiplier sequences}} &\subsetneq& \set{\textrm{Hermite multiplier sequences}} 
\end{eqnarray*} 
established in \cite{tomandrzej} and in \cite{bdfu} respectively. These containments coupled with the fact that $\seq{r^k}$ is a Hermite multiplier sequence if and only if $r \geq 1$ (Theorem 127 in \cite{andrzej}) provide bases $Q$ and $B$ for which the converse of Theorem \ref{classicalequivalence} fails.
\end{rem}
Nonetheless, if the set $Q$ is the standard basis, the converse of Theorem \ref{classicalequivalence} does hold, as we next demonstrate.

\begin{thm} \label{converse} Let $B=\{b_k(x)\}_{k=0}^\infty$ be a simple set of polynomials. If every classical multiplier sequence is a $B$-multiplier sequence, then there exists an $\alpha>1$ such that $\displaystyle{\left \{\alpha^k\right \}_{k=0}^\infty}$ and $\displaystyle{\left \{\left(1/\alpha\right)^k\right\}_{k=0}^\infty}$ are both $B$-multiplier sequences.
\end{thm}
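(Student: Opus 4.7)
The plan is to observe that geometric sequences $\{r^k\}_{k=0}^\infty$ are always classical multiplier sequences for any $r \in \mathbb{R}$, so the conclusion will follow immediately from the hypothesis by choosing any concrete $\alpha > 1$.

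First I would verify the key elementary fact: if $f(x) = \sum_{k=0}^n a_k x^k$ has only real zeros, then for any $r \in \mathbb{R}$, the polynomial
\[
\Gamma_r[f(x)] = \sum_{k=0}^n a_k r^k x^k = f(rx)
\]
also has only real zeros. Indeed, if $r \neq 0$ the zeros of $f(rx)$ are just $r^{-1}$ times the real zeros of $f$, and if $r = 0$ we get the constant $a_0$ which vacuously has only real zeros. Hence $\{r^k\}_{k=0}^\infty$ is a classical multiplier sequence for every $r \in \mathbb{R}$.

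With this observation in hand, the theorem reduces to a one-line argument: fix any $\alpha > 1$ (for example, $\alpha = 2$). Then both $\{\alpha^k\}_{k=0}^\infty$ and $\{(1/\alpha)^k\}_{k=0}^\infty$ are classical multiplier sequences by the above fact, so by the hypothesis of the theorem they are both $B$-multiplier sequences. This is exactly the desired conclusion.

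There is no real obstacle here; the content of the theorem is that the sufficient condition in Theorem \ref{classicalequivalence} is also necessary, and its proof is essentially a tautology once one recognizes that geometric sequences are always classical multiplier sequences. The only subtle point worth emphasizing in the writeup is that the two required $B$-multiplier sequences can be chosen independently of $B$, so the converse holds uniformly across all simple sets $B$ that contain the classical multiplier sequences.
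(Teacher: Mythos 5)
Your proposal is correct and follows essentially the same route as the paper: both observe that $f(rx)$ has only real zeros whenever $f$ does, so every geometric sequence $\{r^k\}_{k=0}^\infty$ is a classical multiplier sequence, and the conclusion is then immediate from the hypothesis for any fixed $\alpha>1$. No further comment is needed.
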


\begin{proof}

Suppose that every classical multiplier sequence is a $B$-multiplier sequence and suppose that $\displaystyle{ f(x) = \sum_{k=0}^n a_k x^n}$ has only real zeroes.  Since the transformation $x\mapsto\beta x$ preserves reality of zeroes for all non-zero, real $\beta$, we have that for all $\alpha>1$,\quad $f(\alpha x)=\displaystyle{\sum_{k=0}^\infty\alpha^k a_k x^k}$ and $f\left(\frac{x}{\alpha}\right)=\displaystyle{\sum_{k=0}^\infty}\frac{1}{\alpha^k}a_kx^k$ both have only real zeroes. Thus both $\displaystyle{\left \{\alpha^k\right \}_{k=0}^\infty}$ and $\displaystyle{\left \{\left(1/\alpha\right)^k\right\}_{k=0}^\infty}$ are classical multiplier sequences, and therefore $B$-multiplier sequences.

\end{proof}
%\begin{rem} Suppose that $B=\{b_k(x)\}_{k=0}^\infty$ is a simple set of polynomials. In \cite{andrzej} Piotrowski shows that  if every classical multiplier sequence is a $B$-multiplier sequence, then there exists a sequence of non-zero real numbers $\seq{c_k}$ and $\beta \in \R$ such that 
%\[
%b_k(x)=c_k(x+\beta)^k \qquad k=0,1,2,\ldots
%\]
%Thus Theorem \ref{converse} shows that for simple sets $Q$, comprised of polynomials of the above type, there exists an $\alpha>1$ such that  $\displaystyle{\left \{\alpha^k\right \}_{k=0}^\infty}$ and $\displaystyle{\left \{\left(1/\alpha\right)^k\right\}_{k=0}^\infty}$ are both $Q$-multiplier sequences.
%\end{rem}
It is a natural question to ask whether the existence of an $\alpha >1$ as in Theorems \ref{classicalequivalence} \& \ref{converse} implies the existence of more than one such $\alpha$. We settle this question in the next Proposition.

\begin{prop}
Let $B=\{b_k(x)\}_{k=0}^\infty$ be a simple set of polynomials. The following are equivalent:
\begin{itemize}
\item[(i)] There exists an $\alpha >1$ such that $\displaystyle{\left \{\alpha^k\right \}_{k=0}^\infty}$ and $\displaystyle{\left \{\left(1/\alpha\right)^k\right\}_{k=0}^\infty}$ are both $B$-multiplier sequences.
\item[(ii)] $\displaystyle{\left \{\alpha^k\right \}_{k=0}^\infty}$ and $\displaystyle{\left \{\left(1/\alpha\right)^k\right\}_{k=0}^\infty}$ are $B$-multiplier sequences for all $\alpha>1$.
\end{itemize}
\end{prop}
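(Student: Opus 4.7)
The direction $(ii) \Rightarrow (i)$ is trivial: simply pick any $\alpha > 1$. The content of the proposition is in $(i) \Rightarrow (ii)$, and my plan is to reduce this to Theorem \ref{classicalequivalence} together with the observation (already essentially made in the proof of Theorem \ref{converse}) that every geometric sequence with positive common ratio is a classical multiplier sequence.

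First, I would assume $(i)$ and invoke Theorem \ref{classicalequivalence}, taking $Q$ to be the standard basis $\{x^k\}_{k=0}^\infty$. As noted in Remark \ref{allclassical}, the theorem tells us that every $Q$-multiplier sequence for every simple set $Q$ is a $B$-multiplier sequence; in particular, every classical multiplier sequence is a $B$-multiplier sequence.

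Next, I would show that for an arbitrary $\beta > 1$, both $\{\beta^k\}_{k=0}^\infty$ and $\{(1/\beta)^k\}_{k=0}^\infty$ are classical multiplier sequences. This is precisely the computation carried out in the proof of Theorem \ref{converse}: if $f(x) = \sum_{k=0}^n a_k x^k$ has only real zeros, then so do $f(\beta x) = \sum_{k=0}^n \beta^k a_k x^k$ and $f(x/\beta) = \sum_{k=0}^n \beta^{-k} a_k x^k$, since $x \mapsto \beta x$ and $x \mapsto x/\beta$ are real-affine transformations. Combining this with the previous paragraph, both sequences are $B$-multiplier sequences, establishing $(ii)$.

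I do not anticipate any obstacle, as the proposition is essentially a corollary of Theorem \ref{classicalequivalence} applied to the standard basis; the only substantive input is the trivial fact that positive geometric sequences are classical multiplier sequences.
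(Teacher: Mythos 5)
Your proposal is correct and follows essentially the same route as the paper: both deduce from Theorem \ref{classicalequivalence} (via Remark \ref{allclassical}) that every classical multiplier sequence is a $B$-multiplier sequence, and then observe that $\{\beta^k\}$ and $\{(1/\beta)^k\}$ are classical multiplier sequences for every $\beta>1$. The only cosmetic difference is that you spell out the affine-substitution computation directly, whereas the paper cites Theorem \ref{converse} for it.
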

\begin{proof} It is clear that $(ii)$ implies $(i)$. To see why $(i)$ implies $(ii)$, suppose that there exists an $\alpha >1$ such that $\displaystyle{\left \{\alpha^k\right \}_{k=0}^\infty}$ and $\displaystyle{\left \{\left(1/\alpha\right)^k\right\}_{k=0}^\infty}$ are both $B$-multiplier sequences. By Theorem \ref{classicalequivalence} (and the remark following the statement of the theorem), every $Q$-multiplier sequence for every simple set $Q$ is a $B$-multiplier sequence. In particular, every classical multiplier sequence is a $B$-multiplier sequence. But by Theorem \ref{converse}, $\displaystyle{\left \{\alpha^k\right \}_{k=0}^\infty}$ and $\displaystyle{\left \{\left(1/\alpha\right)^k\right\}_{k=0}^\infty}$ are classical multiplier sequences for all $\alpha>1$, and hence they are also $B$-multiplier sequences for all $\alpha >1$.
\end{proof}
 We close this section with a classification theorem, which is an immediate consequence of the results contained in the section.
 
 \begin{thm} \label{classicalclassification} Let $B=\{b_k(x)\}_{k=0}^\infty$ be a simple set of polynomials. The set of $B$-multiplier sequences coincides with the set of classical multiplier sequences if and only if there exists an $\alpha >1$ such that $\displaystyle{\left \{\alpha^k\right \}_{k=0}^\infty}$ and $\displaystyle{\left \{\left(1/\alpha\right)^k\right\}_{k=0}^\infty}$ are both $B$-multiplier sequences.  \end{thm}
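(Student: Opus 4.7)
The plan is to handle the two implications separately; each direction is an essentially immediate application of a theorem already proved in this section.

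\textbf{Forward direction.} I would assume that the set of $B$-multiplier sequences coincides with the set of classical multiplier sequences, and fix any $\alpha>1$. The key observation (already used inside the proof of Theorem \ref{converse}) is that the map $x\mapsto \beta x$ preserves reality of zeros for every nonzero real $\beta$, so that $\{\alpha^k\}_{k=0}^\infty$ and $\{(1/\alpha)^k\}_{k=0}^\infty$ are classical multiplier sequences for every $\alpha>1$. The assumed coincidence of the two classes then immediately yields that both of these geometric sequences are $B$-multiplier sequences, which is what we wanted.

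\textbf{Backward direction.} Suppose there exists $\alpha>1$ such that $\{\alpha^k\}_{k=0}^\infty$ and $\{(1/\alpha)^k\}_{k=0}^\infty$ are both $B$-multiplier sequences. The inclusion ``classical $\subseteq$ $B$'' is immediate from Theorem \ref{classicalequivalence} applied with $Q$ equal to the standard basis $\{x^k\}_{k=0}^\infty$ (see also Remark \ref{allclassical}). For the reverse inclusion ``$B$ $\subseteq$ classical,'' I would apply Theorem \ref{classicalequivalence} a second time, with the roles of $Q$ and $B$ swapped: take the ``$B$'' of that theorem to be the standard basis (which trivially satisfies its hypothesis, since every geometric sequence $\{\alpha^k\}_{k=0}^\infty$ is a classical multiplier sequence, by the same scaling argument recalled above), and take the ``$Q$'' of that theorem to be our given basis $B$. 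The conclusion of Theorem \ref{classicalequivalence} is then exactly that every $B$-multiplier sequence is a classical multiplier sequence. Combining the two inclusions finishes the proof.

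I do not expect any serious obstacle in this argument, since the statement is packaging together results already established in the section. The only point that requires attention is the swap of roles in the backward direction: one has to notice that the standard basis itself automatically satisfies the hypothesis of Theorem \ref{classicalequivalence}, so the inclusion ``$B$-multiplier sequences $\subseteq$ classical multiplier sequences'' comes essentially for free.
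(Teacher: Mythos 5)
Your proof is correct and follows exactly the route the paper intends (the paper calls the theorem an "immediate consequence of the results contained in the section" and, at the start of Section \ref{partitions}, uses precisely your role-swapped application of Theorem \ref{classicalequivalence} with the standard basis playing the part of $B$ to get the inclusion of $B$-multiplier sequences into the classical ones). No gaps.
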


\section{Partitions}\label{partitions}

A quick observation reveals that given any $\alpha >1$, both the sequence $\{\alpha^k\}_{k=0}^{\infty}$ and the sequence $\displaystyle{\left\{\left(1/\alpha \right)^k\right \}_{k=0}^{\infty}}$ are classical multiplier sequences. By the remark following Theorem \ref{classicalequivalence} we conclude that if $Q$ is a simple set of polynomials, then every $Q$-multiplier sequence must be a classical multiplier sequence. As a result, $Q$-multiplier sequences inherit a list of properties from the classical multiplier sequences. In particular, if $Q$ is a simple set of polynomials and $\seq{\gamma_k}$ is a $Q$-multiplier sequence, then the following hold:
\begin{itemize}
\item[(i)] if there exists integers $n> m \geq 0$ such that $\gamma_m \neq 0$ and $\gamma_n=0$, then $\gamma_k=0$ for all $k \geq n$,
\item[(ii)] the terms of $\{\gamma_k\}_{k=0}^{\infty}$ are either all of the same sign, or they alternate in sign,
\item[(iii)] for any $r \in \mathbb{R}$, the sequence $\{r \gamma_k\}_{k=0}^{\infty}$ is also a $Q$-multiplier sequence,
\item[(iv)] the terms of $\{\gamma_k\}_{k=0}^{\infty}$ satisfy Tur\'an's inequality
\[
\gamma_k^2-\gamma_{k-1}\gamma_{k+1} \geq 0 \quad \quad (k=1,2,3, \ldots).
\] 
\end{itemize}
Since the set of classical multiplier sequences contains all $Q$-multiplier sequences for every simple set $Q$, we ask whether there exists a collection of simple sets of polynomials $\left\{ Q_j \right \}$, such that the sets of $Q_j$-multiplier sequences forms a partition for the set of all classical multiplier sequences?  One immediately notices that this cannot be true in the traditional sense of a partition, since the intersection of the sets of $Q_j$-multiplier sequences is trivially non-empty: it contains all constant sequences.  In what follows, we establish that this intersection contains very little more than the constant sequences. We show that a logical choice for a `minimal' partition are two sets of generalized Hermte polynomials $\mathcal{H}^{(\alpha)}$: one with with $\alpha >0$ and one with $\alpha <0$, and exhibit that despite its appeal, this choice fails to produce the required partition. 

\medskip

\noindent {\bf Notation:} For ease of exposition given a simple set of polynomials $Q$, we denote by $Q_{MS}$ the set of $Q$-multiplier sequences. In addition we shall write $SSP$ for the set of all simple sets of polynomials.

\medskip

 We begin with a useful result which allows us to decide whether  a classical multiplier sequence is a geometric sequence just  by looking at the first three terms of the sequence.
\begin{prop}\label{geometric}
Suppose $\{\gamma_k\}_{k=0}^\infty$ is a classical multiplier sequence with the first three terms in geometric progression, i.e.,
\begin{equation}\label{geomeq}
\frac{\gamma_1}{\gamma_0} = \frac{\gamma_2}{\gamma_1} = \alpha
 \end{equation}
for some $\alpha \in \mathbb{R}$. Then $\{\gamma_k\}_{k=0}^\infty$ is a geometric sequence, where
\[\gamma_n = \gamma_0\alpha^n\]
for all $n \in \mathbb{N}$.
\end{prop}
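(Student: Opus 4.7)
The hypothesis implicitly requires $\gamma_0,\gamma_1\neq 0$ (so that the ratios defining $\alpha$ make sense), hence $\alpha\neq 0$, and the identity $\gamma_n=\gamma_0\alpha^n$ already holds for $n\in\{0,1,2\}$ since $\gamma_2=\alpha\gamma_1=\alpha^2\gamma_0$. I would argue the rest by strong induction on $n\geq 3$. The central tool is the Jensen polynomial
\[
g_n(x) \;:=\; \sum_{k=0}^{n}\binom{n}{k}\gamma_k\, x^k.
\]
Because $(1+x)^n$ has only the real zero $-1$ of multiplicity $n$ and $\{\gamma_k\}_{k=0}^{\infty}$ is a classical multiplier sequence, Property A forces $g_n$ to have only real zeros.

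Under the inductive hypothesis that $\gamma_k=\gamma_0\alpha^k$ for $0\leq k< n$, the binomial theorem collapses $g_n$ into the clean form
\[
g_n(x) \;=\; \gamma_0(1+\alpha x)^n \;+\; c\,x^n, \qquad c\;:=\;\gamma_n-\gamma_0\alpha^n,
\]
so the inductive step reduces to showing $c=0$. Supposing instead that $c\neq 0$, I would first note that $g_n(0)=\gamma_0\neq 0$, so every zero of $g_n$ is nonzero, and then apply the substitution $y=\tfrac{1}{x}+\alpha$, which transforms $g_n(x)=0$ into the monomial equation $y^n=-c/\gamma_0$.

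For any nonzero real $\lambda$, the equation $y^n=\lambda$ has at most two real solutions: exactly one when $n$ is odd, and either zero or two when $n$ is even. Excluding the case $y=\alpha$ (which corresponds to no finite $x$, and occurs precisely when $\gamma_n=0$, in which case $\deg g_n=n-1$), one concludes that $g_n$ has at most two real zeros. Since $g_n$ has degree $n$ when $\gamma_n\neq 0$ and degree $n-1$ when $\gamma_n=0$, this bound contradicts the reality-of-zeros requirement for all $n\geq 4$; the lone remaining corner $n=3$, $\gamma_3=0$ is dispatched by inspection, as $g_3(x)=\gamma_0(1+3\alpha x+3\alpha^2 x^2)$ has discriminant $-3\alpha^2<0$ and hence no real zeros although it requires two. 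In every case $c=0$, closing the induction.

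The main obstacle I anticipate is the bookkeeping around the degenerate sub-case $\gamma_n=0$, where $g_n$ loses its leading term and the substitution $y=1/x+\alpha$ introduces a spurious ``root at infinity''; away from that wrinkle the root-count is the standard geometry of $n$th roots of a nonzero real number, so the substance of the proof really is the passage from $\gamma_1^2=\gamma_0\gamma_2$ to the rigid factorization $g_n=\gamma_0(1+\alpha x)^n+c\,x^n$.
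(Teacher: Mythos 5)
Your argument is in substance the paper's own proof: the collapse $g_n(x)=\gamma_0(1+\alpha x)^n+c\,x^n$ is exactly the paper's identity $\gamma_0(x+\alpha)^n+c$ for the reversed Jensen polynomial, and your substitution $y=\tfrac1x+\alpha$ is precisely the paper's two-step ``reverse the coefficients, then translate by $\alpha$,'' after which both proofs come down to the fact that $\gamma_0y^n+c$ cannot be real-rooted for $n\geq 3$ unless $c=0$. The one step you state too loosely is the zero count. From ``$y^n=\lambda$ has at most two real solutions'' you infer that $g_n$ has at most two real zeros and contrast this with $\deg g_n\geq 3$; but a real-rooted polynomial of degree $n$ may have only \emph{one distinct} real zero (a perfect power), so the cardinality of the zero set alone yields no contradiction --- you must count with multiplicity. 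The gap closes in one line in either of two ways: (a) when $c\neq 0$ every zero of $g_n$ is simple, since a common zero of $g_n$ and $g_n'$ would give $\gamma_0(1+\alpha x)^n=-cx^n$ and $\alpha\gamma_0(1+\alpha x)^{n-1}=-cx^{n-1}$ simultaneously (note $x\neq 0$ and $1+\alpha x\neq 0$ at any zero), whence $1+\alpha x=\alpha x$, absurd; or (b) record your substitution as the polynomial identity $(y-\alpha)^n g_n\bigl(1/(y-\alpha)\bigr)=\gamma_0y^n+c$, so that a factorization of $g_n$ into real linear factors would force one for $\gamma_0y^n+c$, which is impossible for $n\geq 3$ and $c\neq 0$. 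Your treatment of the base cases, of the degenerate case $\gamma_n=0$ (the root at $y=\alpha$, i.e.\ at infinity), and of the $n=3$ corner is correct, as is the observation that the hypothesis forces $\gamma_0,\gamma_1\neq 0$ and hence $\alpha\neq 0$ --- an assumption the paper's proof also uses tacitly in its final step.
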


\begin{proof}
We proceed by induction. Let $n > 2$, and assume that $\gamma_m = \gamma_0\alpha^m$ for all $m<n$. (We are given that this is true for $n = 3$).  We will show that $\gamma_n = \gamma_0\alpha^n.$\\
From the algebraic characterization of multiplier sequences,
\[\sum_{k=0}^n\gamma_k\binom{n}{k}x^k\]
must have only real zeros. But this is equivalent to saying
\begin{eqnarray*}
\sum_{k=0}^n \gamma_{n-k}\binom{n}{k}x^k &=& \sum_{k=1}^n \gamma_{n-k}\binom{n}{k}x^k + \gamma_n \\
&=& \sum_{k=1}^n \gamma_0\alpha^{n-k}\binom{n}{k}x^k + \gamma_n \\
&=& \gamma_0\left(x+\alpha\right)^n + \left(\gamma_n - \gamma_0\alpha^n\right)
\end{eqnarray*}
has only real zeros. The transformation $(x+\alpha)\mapsto x$ preserves reality of zeros, so
 \[\gamma_0x^n + \left(\gamma_n - \gamma_0\alpha^n\right)\]
 must have only real zeros. But polynomials of the form $ax^n + b$ have only real zeros for $n>2$ iff b = 0. So $\gamma_n - \gamma_0\alpha^n$ must be zero, hence $\gamma_n = \gamma_0\alpha^n$.\\\end{proof}

\begin{rem}
With the choice $\alpha = 1$ in equation \ref{geomeq} we obtain that a classical multiplier sequence $\seq{\gamma_k}$ with $\gamma_0 = \gamma_1 = \gamma_2$ must be a constant sequence. %This results is known see for example \cite{}
\end{rem}
%-------------------------------------------------------
\begin{thm} Let $\displaystyle{S=\bigcap_{Q \in SSP} Q_{MS}}$. If $\{\gamma_k\}_{k=0}^\infty \in S$, then one of the following holds:
\begin{itemize}
\item[(i)] $\{\gamma_k\}_{k=0}^\infty$ is a constant sequence, or
\item[(ii)] $\{\gamma_k\}_{k=0}^\infty=\{ \gamma_0, \gamma_1, 0,0,0,\ldots \}$
\end{itemize}  
\end{thm}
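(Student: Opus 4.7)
\smallskip

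\noindent\textit{Proof plan.} The strategy is to exploit the freedom to choose $Q \in SSP$, producing constraints that force the first three terms $\gamma_0,\gamma_1,\gamma_2$ to be equal, and then invoking Proposition \ref{geometric} together with its remark to conclude that the whole sequence is constant. Since the standard basis lies in $SSP$, any $\{\gamma_k\}\in S$ is in particular a classical multiplier sequence, so properties (i)--(iv) listed above are available for free.

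I would begin by handling the degenerate cases. If $\gamma_k=0$ for all $k\ge 2$, we are in case (ii) and done. Otherwise some $\gamma_j\ne 0$ with $j\ge 2$, and I would rule out the combination $\gamma_0=0$ together with $\gamma_j\ne 0$ by taking the simple set with $q_i(x)=x^i$ for $i\ne j$ and $q_j(x)=x^j+1$, applying the $Q$-multiplier sequence condition to $f(x)=-q_0(x)+q_j(x)=x^j$ (which has only real zeros), and observing that the transform $\gamma_j(x^j+1)$ has non-real zeros, a contradiction. The case $\gamma_0\ne 0$ and $\gamma_2=0$ is excluded by property (i). So the problem reduces to showing that under $\gamma_0\gamma_2\ne 0$ we must have $\gamma_0=\gamma_1=\gamma_2$.

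To force these two equalities I would use two simple sets, each designed to extract one of them. Define $Q^{(1)}=\{1,\,x,\,x(x-1),\,x^3,\,x^4,\ldots\}$ and $Q^{(2)}=\{1,\,x-1,\,x^2,\,x^3,\,x^4,\ldots\}$. For each, apply the $Q^{(\ell)}$-multiplier sequence condition to a general quadratic $f=c_0q_0+c_1q_1+c_2q_2$: write down the discriminants of $f$ and of its transform, substitute the extreme value of $c_0$ permitted by real-rootedness of $f$ (using $\gamma_0\gamma_2>0$ from property (ii)), and require the resulting inequality in $c_1,c_2$ to hold universally. For $Q^{(1)}$ the algebra collapses, after dividing by $c_2^2$, to $\gamma_0\gamma_2(\gamma_1-\gamma_2)^2\le 0$, which forces $\gamma_1=\gamma_2$; for $Q^{(2)}$ an analogous computation yields the universal inequality $(\gamma_1^2-\gamma_0\gamma_2)c_1^2+4c_2\gamma_2(\gamma_1-\gamma_0)c_1\ge 0$, whose $c_1$-linearity in the limit forces $\gamma_0=\gamma_1$. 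Combining gives $\gamma_0=\gamma_1=\gamma_2$, and the remark following Proposition \ref{geometric} finishes the proof.

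The main technical obstacle is keeping track of cases in the discriminant argument: the Tur\'an-equality case $\gamma_1^2=\gamma_0\gamma_2$ (where the quadratic in $c_1$ degenerates to linear) must be treated separately from the strict case, and both signs of $c_2$ must be verified, though in each instance substituting the extreme $c_0$ yields the same universal inequality. One should also confirm that $Q^{(1)}$ and $Q^{(2)}$ are not affine images of the standard basis in the sense of Lemma \ref{andlem}, so that the constraints they impose strictly refine classical multiplier sequence membership.
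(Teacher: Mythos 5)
Your proposal is correct and follows essentially the same route as the paper: both perturb the standard basis only in degrees at most two, extract quadratic discriminant inequalities (combined with Tur\'an's inequality) that force $\gamma_0=\gamma_1=\gamma_2$, and then conclude via Proposition \ref{geometric} and the remark following it. The differences are cosmetic --- the paper uses three such bases ($\{1,x,x+x^2,x^3,\ldots\}$, $\{1,x+1,x^2+x+1,x^3,\ldots\}$, $\{1,x,1+x^2,x^3,\ldots\}$) with hand-picked test quadratics where you use two bases and a general quadratic with extremal constant term, and your explicit elimination of the $\gamma_0=0$ case is a point the paper treats more briefly --- but the underlying mechanism is identical.
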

\begin{proof} Suppose $\{\gamma_k\}_{k=0}^\infty \in S$. Since a linear polynomial has only real zeros, it is immediate that sequences of the form $\{ \gamma_0, \gamma_1, 0,0,0,\ldots \}$ are in $S$. Thus we proceed by assuming that $\{\gamma_k\}_{k=0}^\infty \in S$ is not of this type and we show that in that case it must be a constant sequence. To this end consider the following three simple sets of polynomials:
\begin{eqnarray*}
Q_1&=&\left\{ 1,x,x+x^2,x^3,x^4, \ldots, x^n, \ldots \right\} \\
Q_2&=&\left\{1,x+1, x^2+x+1, x^3, x^4, \ldots, x^n,\ldots \right\} \qquad \textrm{and}  \\
Q_3&=&\left\{ 1,x,1+x^2,x^3,x^4, \ldots, x^n, \ldots \right\}. 
\end{eqnarray*}
The polynomial $p(x)=4x^2+4x+1$ has only real zeros. Since $\{\gamma_k\}_{k=0}^\infty$ is a $Q_1$ multiplier sequence, it follows that $\tilde{p}(x)=\gamma_0+4 \gamma_2 x+4 \gamma_2x^2$ has only real zeros as well, and hence we must have
\[
16\gamma_2^2-16\gamma_2\gamma_0=16 \gamma_2 (\gamma_2-\gamma_0) \geq 0.
\]
This implies that $\gamma_2$ and $\gamma_2-\gamma_0$ have the same sign, or $\gamma_2=\gamma_0$. Expanding $f(x)=x^2$ in $Q_3$ and applying $\{\gamma_k\}_{k=0}^\infty$ leads to the polynomial $\gamma_2x^2+(\gamma_0-\gamma_2)$, which should have only real zeros, since $\{\gamma_k\}_{k=0}^\infty$ is also a $Q_3$ multiplier sequence. Thus $\gamma_2$ and $\gamma_0-\gamma_2$ also have the same sign, unless $\gamma_2=\gamma_0$. We conclude that $\gamma_0=\gamma_2$. Note that $\gamma_0=\gamma_2 \neq 0$, since we assumed that our sequence $\{\gamma_k\}_{k=0}^\infty$ is not of the second type. Thus, we may assume that (after perhaps a division by a constant factor) $\gamma_0=\gamma_2=1$. Let 
\[
f(x) = ax^2 + bx + \frac{b^2}{4a}
\] where $a = \gamma_1 + 1$ and $b = \gamma_1 + 2$. Then $f(x)$ has only real zeros, since $b^2 - (4ab^2)/(4a) = 0$.  Expanding $f(x)$ in terms of the basis $Q_2$ we get 
\[
f(x) = a(x^2 + x + 1) + (x + 1) + \left(\frac{b^2}{a} - b\right).
\]
Since $\{\gamma_k\}_{k=0}^\infty$ is also a $Q_2$ multiplier sequence, we see that the polynomial
\begin{eqnarray}
f^*(x) &=& a(x^2 + x + 1) + \gamma_1(x + 1) + \left(\dfrac{b^2}{4a} - b\right)\nonumber\\
&=& ax^2 + (a + \gamma_1)x + a + \gamma_1 + \dfrac{b^2}{4a} - b\nonumber\\
&=& ax^2 + (2\gamma_1 + 1)x + \left(\gamma_1 - 1 + \dfrac{b^2}{4a}\right)\nonumber
\end{eqnarray}
also has only real zeros. Hence the discriminant of $f^*$, given by $\Delta=1-\gamma_1^2$, must be non-negative. Consequently $\gamma_1^2 \leq 1$. On the other hand, by Tur\'an's inequality, $\gamma^2_1 \geq 1$. Thus $\gamma_1^2=1$, from which we conclude that $\gamma_1=1$. (Note that $\gamma_1\neq-1$ since $f(x) = x^2 = (x^2 + x + 1) - (x + 1)$ has only real zeros, but $f^*(x) = (x^2 + x + 1) + (x + 1) = x^2 + 2x + 2 = (x + 1)^2 + 1$ does not have any real zeros.) Finally, by invoking Proposition \ref{geometric} we see that $\{\gamma_k\}_{k=0}^\infty$ is a geometric sequence with $\alpha=1$, and is hence a constant sequence. The proof is complete.

%\begin{eqnarray}
%&&(2\gamma_1 + 1)^2 - 4a\left(\gamma_1 - 1 + \dfrac{b^2}{4a}\right)\nonumber\\
%&=& 4\gamma_1^2 + 4\gamma_1 + 1 - 4a\gamma_1 +4a - b^2 \nonumber\\
%&=& 4\gamma_1^2 + 4\gamma_1 + 1 - 4\gamma_1^2 -4\gamma_1 +4\gamma_1 + 4 - \gamma_1^2 -4\gamma_1 %-4\nonumber\\
%&=& 1 - \gamma_1^2 \nonumber
%\end{eqnarray}

\end{proof}

Next we give a quick generalization of Theorem 5.6 in \cite{tomandrzej} by proving the following, somewhat surprising fact: if $\seq{\gamma_k}$ is a multiplier sequence for a set of simple polynomials with only simple zeros, then it is a $\mathcal{H}^{(\alpha)}$-multiplier sequence\footnote{In \cite{andrzej} Piotrowski shows that the set of Hermite multiplier sequences coincides with the set of $\mathcal{H}^{(\alpha)}$-multiplier sequences for every $\alpha > 0$.} for every $\alpha >0$.  In particular, if $Q$ is a simple set of orthogonal polynomials, then every $Q$-multiplier sequence is a Hermite multiplier sequence (for more orthogonal polynomials see \cite{szego}).

\begin{thm} \label{Hermiteallcontain} Suppose that $Q=\seq{q_k(x)}$ is a simple set polynomials such that each $q_k(x)$ has only simple real zeros, normalized so that leading coefficient of each $q_k(x)$ is positive. If $\seq{\gamma_k}$ is a $Q$-multiplier sequence, then $\seq{\gamma_k}$ is a Hermite multiplier sequence.
\end{thm}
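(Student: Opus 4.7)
The plan is to adapt the approximation-and-Hurwitz strategy that already drives the proof of Theorem \ref{classicalequivalence}, using it to transfer the $Q$-multiplier sequence property onto the generalized Hermite basis $\mathcal{H}^{(\alpha)}$ for some $\alpha > 0$. By the footnote, membership in the set of $\mathcal{H}^{(\alpha)}$-multiplier sequences for any single $\alpha > 0$ is equivalent to being a Hermite multiplier sequence, so producing one such $\alpha$ is enough.

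First I would invoke Lemma \ref{andlem} to normalize each $q_k$ conveniently -- for instance taking $q_k$ to be monic with simple real zeros $r_{k,1} < \cdots < r_{k,k}$. Then I would construct a one-parameter family of simple sets $Q^{(s)} = \{q_k^{(s)}\}_{k=0}^\infty$ (rescalings and/or perturbations of $Q$) with two key properties: (i) by repeated application of Lemma \ref{andlem} and Lemma \ref{changebasis}, $\{\gamma_k\}$ remains a $Q^{(s)}$-multiplier sequence for every admissible $s$, and (ii) the polynomials $q_k^{(s)}$ converge coefficient-by-coefficient to $\mathcal{H}_k^{(\alpha)}$ in some limiting regime. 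Given $f = \sum_{k=0}^n c_k \mathcal{H}_k^{(\alpha)}(x)$ with only real zeros, I would expand $f$ in the $Q^{(s)}$-basis as $\sum c_k^{(s)} q_k^{(s)}(x)$, appeal to (i) to conclude that $\sum \gamma_k c_k^{(s)} q_k^{(s)}(x)$ has only real zeros for every $s$, and then pass to the limit via three local-uniform-convergence claims analogous to Claims 1--3 in the proof of Theorem \ref{classicalequivalence}, so that Hurwitz's theorem delivers the real-rootedness of $\sum \gamma_k c_k \mathcal{H}_k^{(\alpha)}(x)$.

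The main obstacle is step (ii). In Theorem \ref{classicalequivalence} the approximating family was built via Lemma \ref{changebasis}, whose hypothesis demands that both $\{\alpha^k\}$ and $\{1/\alpha^k\}$ be $B$-multiplier sequences. For $B = \mathcal{H}^{(\alpha)}$ this hypothesis fails: Remark \ref{simplesetcontHermite} records that $\{r^k\}$ is a Hermite multiplier sequence only when $r \geq 1$, so $\{1/\alpha^k\}$ is unavailable for any $\alpha > 1$. Consequently Lemma \ref{changebasis} cannot be invoked in black-box fashion, and the hypothesis that each $q_k$ has only simple real zeros must be exploited differently to produce the approximants. I expect the technical heart of the proof to lie here -- likely in an interlacing or zero-migration argument that uses the simple-real-zero structure of $Q$ to deform the $q_k$ continuously into the $\mathcal{H}_k^{(\alpha)}$ while preserving (via Lemma \ref{andlem} applied iteratively) the $Q^{(s)}$-multiplier sequence property of $\{\gamma_k\}$ at each stage.
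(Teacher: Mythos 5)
Your proposal has a genuine gap, and it is exactly the one you flag yourself: the construction of the deforming family $Q^{(s)}$ satisfying both (i) preservation of the multiplier-sequence property and (ii) convergence to $\mathcal{H}^{(\alpha)}_k$. The only tools in the paper for transporting a multiplier sequence from one simple set to another are Lemma \ref{andlem} (affine change of variable plus nonzero rescaling of each basis element) and Lemma \ref{changebasis} (whose hypothesis, as you note, fails for the Hermite basis since $\{(1/\alpha)^k\}$ is not a Hermite multiplier sequence for $\alpha>1$). Affine changes of variable cannot turn a general simple set with simple real zeros into the Hermite basis, so there is no mechanism here to realize step (ii) while keeping step (i); the ``interlacing or zero-migration argument'' you defer to is the entire content of the theorem, and the proposal does not supply it. As written, the argument does not close.

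The paper's actual proof avoids approximating the Hermite basis altogether. After reducing to non-negative sequences, it exploits the simple-zero hypothesis pointwise in $n$: the set $E_n=\{b\in\R : q_n(x)+b\,q_{n-2}(x)\ \text{has only real zeros}\}$ is closed and bounded above, and its supremum $b_n$ is \emph{positive} precisely because $q_n$ has simple real zeros (small perturbations by $q_{n-2}$ keep the zeros real). Applying $\Gamma$ to $q_n+b_nq_{n-2}$ yields $\gamma_n\bigl(q_n+\tfrac{\gamma_{n-2}}{\gamma_n}b_nq_{n-2}\bigr)$, and maximality of $b_n$ forces $0<\gamma_{n-2}/\gamma_n\le 1$; Tur\'an's inequality then gives $\gamma_{n-1}\ge\gamma_{n-2}$, so the sequence is non-decreasing. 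The conclusion follows from the known classification (Proposition 151 in \cite{andrzej}) that every non-negative, non-decreasing classical multiplier sequence is a Hermite multiplier sequence. In short: the paper converts the hypothesis on the zeros of the $q_k$ into a monotonicity statement about $\{\gamma_k\}$ and then quotes a characterization of Hermite multiplier sequences, rather than running a Hurwitz limit toward the Hermite basis. If you want to salvage your approach, you would first need to prove a deformation-invariance principle for multiplier sequences that the paper does not have and that seems unlikely to hold in the generality you need.
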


\begin{proof} It suffices to establish the result for non-negative sequences. If the result holds for all non-negative sequences, and $\seq{\gamma_k}$ is any $Q$-multiplier sequence, then either 
\[
\seq{\gamma_k}, \seq{-\gamma_k}, \seq{(-1)^k\gamma_k} \ \textrm{or} \ \seq{(-1)^{k+1}\gamma_k}
\]
 is a non-negative sequence, and is hence a Hermite multiplier sequence. But then $\seq{\gamma_k}$ is itself a Hermite multiplier sequence as a consequence of properties (ii) and (iii) at the beginning of this section. Thus for the rest of the proof we assume that $\seq{\gamma_k}$ is a non-negative Q-multiplier sequence. Observe that the set
 \[
 E_n=\set{b \in \R \ \big| \  q_n(x)+b q_{n-2}(x) \quad \textrm{has only real zeros}} \qquad (n \geq 2)
 \]
 is (i) closed, essentially as consequence of Hurwitz' theorem, and (ii) bounded above by a positive number, since 
 \[
 \frac{d^{n-2}}{dx^{n-2}}\left( q_n(x)+bq_{n-2}(x) \right)=k_1 x^2+ k_2x+bk_3 \qquad (k_1,k_3 >0)
 \]
 has complex zeros for large enough $b$, and hence so does $q_n(x)+bq_{n-2}(x)$. The fact that the upper bound is positive follows from the simplicity of the zeros of $q_n(x)$, for this condition implies that $(-\ve_n,\ve_n) \subset E_n$ for some $\ve_n >0$. If $\seq{\gamma_k}$ is of the form
 \[
 \ldots, 0,0,\gamma_n, \gamma_{n+1},0,0,\ldots
 \]
 for some $n \in \mathbb{N}$, then it is automatically a (trivial) Hermite multiplier sequence. If $\seq{\gamma_k}$ is not of this form, then there must exist an $m\in \mathbb{N}$ such that $\gamma_k=0$ for $k<m$ and $\gamma_k \neq 0$ for $k \geq m$. Using this fact, we show that $\seq{\gamma_k}$ must be non-decreasing. To this end let $b_n=\max E_n$, and consider the polynomial
 \[
 q_n(x)+b_nq_{n-2}(x),
 \]
 which has only real zeros. Since $\seq{\gamma_k}$ is a Q-multiplier sequence, it follows that
 \[
 \gamma_n q_n(x)+\gamma_{n-2}b_n q_{n-2}(x)=\gamma_n \left(q_n(x)+\frac{\gamma_{n-2}}{\gamma_n}b_n q_{n-2}(x) \right)
  \]
  has only real zeros as well.  By the maximality of $b_n$ we must then have $\displaystyle{0<\frac{\gamma_{n-2}}{\gamma_n} \leq 1 }$. On the other hand, using Tur\'an's inequality we see that 
  \[
  \left(\frac{\gamma_{n-1}}{\gamma_{n-2}} \right)^2 \geq \frac{\gamma_{n}}{\gamma_{n-2}} \geq 1, 
  \]
  which in turn implies that $\gamma_{n-1} \geq \gamma_{n-2}$ for $n \geq m+2$. Since the same inequality holds trivially for $n <m+2$, we conclude that $\seq{\gamma_k}$ in non-decreasing. Finally, by Proposition 151 in \cite{andrzej}, every non-decreasing non-negative classical multiplier sequence is a Hermite multiplier sequence. The proof is complete.
 \end{proof}

The preceding theorem together with Proposition 151 in \cite{andrzej} suggest that when trying to partition the set of all classical multiplier sequences, one should look to the generalized Hermite bases. In line with this suggestion, at the end of his dissertation, Piotrowski posed the following problem (Problem 165, p. 152): 
\begin{quotation} If $\seq{\gamma_k}$ is a classical multiplier sequence, then does there exists a non-zero real constant $\alpha$ such that $\seq{\gamma_k}$ is an $\mathcal{H}^{(\alpha)}$-multiplier sequence? 
\end{quotation}
We are now in position to answer this question in the negative. 

\begin{lem} \label{noninc} Let $\seq{\gamma_k}$ be a classical multiplier sequence of non-negative terms. If there exists $n \in \mathbb{N}$ such that $\gamma_n \leq \gamma_{n-1}$. Then $\gamma_k \leq \gamma_{k-1}$ for all $k \geq n$.
\end{lem}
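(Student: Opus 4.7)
The natural tool is Tur\'an's inequality from property (iv) at the start of this section, $\gamma_k^2 \geq \gamma_{k-1}\gamma_{k+1}$. My first observation would be that whenever $\gamma_{k-1} > 0$, Tur\'an rearranges to $\gamma_{k+1} \leq \gamma_k^2/\gamma_{k-1}$, and combining this with an assumed $\gamma_k \leq \gamma_{k-1}$ gives $\gamma_{k+1} \leq \gamma_k \cdot (\gamma_k/\gamma_{k-1}) \leq \gamma_k$. This single step is the engine that propagates ``non-increasing'' one index forward, and the whole proof is built around iterating it.

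\medskip

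The plan is to induct on $k \geq n$. The base case $k = n$ is exactly the hypothesis. For the inductive step I would assume $\gamma_k \leq \gamma_{k-1}$ and derive $\gamma_{k+1} \leq \gamma_k$, splitting into two subcases. If $\gamma_{k-1} > 0$, the Tur\'an calculation above finishes the step immediately. If $\gamma_{k-1} = 0$, then $\gamma_k = 0$ follows from the inductive hypothesis and non-negativity; property (i) from the start of this section, applied to the zero at index $k-1$ together with some earlier non-zero term $\gamma_m$ with $m < k-1$, then forces $\gamma_j = 0$ for every $j \geq k-1$, and in particular $\gamma_{k+1} = 0 \leq \gamma_k$.

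\medskip

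The one point that will need care --- and which I expect to be the main obstacle --- is the degenerate configuration in which $\gamma_0 = \gamma_1 = \cdots = \gamma_{k-1} = 0$, since property (i) then has no non-zero term to latch onto and Tur\'an alone permits a temporary rise (the classical multiplier sequence $\set{0,0,1,2,0,0,\ldots}$ shows this is not a vacuous worry). I would handle this by interpreting the hypothesis with $\gamma_{n-1} > 0$ tacitly assumed, or equivalently by re-indexing the sequence to begin at its first non-zero entry. With that convention settled, the two-part induction above closes and the lemma follows.
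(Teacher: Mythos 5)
Your proposal is correct and runs on the same engine as the paper's proof: Tur\'an's inequality, used to push ``$\gamma_k\le\gamma_{k-1}$'' one index forward, iterated. The paper phrases the step as $\gamma_n(\gamma_n-\gamma_{n+1})\ge\gamma_n^2-\gamma_{n-1}\gamma_{n+1}\ge 0$ and then divides by $\gamma_n$ when it is positive, which is your Case 1 in a slightly different algebraic dress; the zero case is where the two treatments diverge, and there your version is actually the more careful one. The paper simply asserts ``if $\gamma_n=0$ then $\gamma_k=0$ for all $k\ge n$,'' but that assertion rests on property (i), which needs a non-zero term \emph{before} the vanishing one, exactly the hole you point out. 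Your counterexample $\set{0,0,1,2,0,0,\ldots}$ is a genuine classical multiplier sequence (it sends $\sum a_kx^k$ to $x^2(a_2+2a_3x)$, and Tur\'an is satisfied throughout), and with $n=1$ it satisfies the lemma's hypothesis $\gamma_1\le\gamma_0$ while violating its conclusion at $k=2$; so the lemma as literally stated is false, and your added hypothesis $\gamma_{n-1}>0$ (or re-indexing to the first non-zero entry) is the right repair. It is worth noting that the only place the paper invokes the lemma --- the proposition on $\mathcal{H}^{(\alpha)}$-multiplier sequences with $\alpha<0$ --- applies it with $n=2$ after separately disposing of $\gamma_2=0$ and establishing $\gamma_1\ge\gamma_2>0$, so your strengthened hypothesis holds there and the downstream results are unaffected.
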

\begin{proof} Suppose that $n\in \mathbb{N}$ is such that $\gamma_n \leq \gamma_{n-1}$. Then
\[
\gamma_n(\gamma_n-\gamma_{n+1}) \geq \gamma_n^2-\gamma_{n-1}\gamma_{n+1} \stackrel{*}{\geq} 0,
\]
where the starred inequality is Tur\'an's inequality. If $\gamma_n=0$ then $\gamma_k=0$ for all $k\geq n$. Otherwise we conclude that $\gamma_{n+1} \leq \gamma_n$. The result follows. 
\end{proof}
\begin{prop} Let $\alpha <0$. If a sequence of non-negative terms $\seq{\gamma_k}$ is a $\mathcal{H}^{(\alpha)}$-multiplier sequence, then $\gamma_{k+1} \leq \gamma_k$ for $k \geq1$.
\end{prop}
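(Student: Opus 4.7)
The overall plan is to reduce the proposition to a single base inequality, $\gamma_2 \leq \gamma_1$, and then let the preceding lemma propagate monotonicity to all larger indices. Since every $\mathcal{H}^{(\alpha)}$-multiplier sequence is a classical multiplier sequence, as established in the opening discussion of Section \ref{partitions}, Lemma \ref{noninc} is available: the moment a single index $n$ is exhibited with $\gamma_n \leq \gamma_{n-1}$, the inequality $\gamma_k \leq \gamma_{k-1}$ holds for every $k \geq n$. Applied at $n=2$ this produces precisely the claim $\gamma_{k+1} \leq \gamma_k$ for $k \geq 1$, so it suffices to establish $\gamma_2 \leq \gamma_1$.

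For the base case the key observation is that when $\alpha < 0$ one has $\mathcal{H}_2^{(\alpha)}(x) = x^2 + d$ with $d > 0$, while $\mathcal{H}_1^{(\alpha)}(x) = x$. Choosing $c = 2\sqrt{d}$, the polynomial
\[
p(x) \;:=\; \mathcal{H}_2^{(\alpha)}(x) + c\,\mathcal{H}_1^{(\alpha)}(x) \;=\; \bigl(x + \sqrt{d}\bigr)^2
\]
has only real zeros (a double root at $-\sqrt{d}$). Since $\{\gamma_k\}_{k=0}^\infty$ is an $\mathcal{H}^{(\alpha)}$-multiplier sequence, the transformed polynomial
\[
\tilde{p}(x) \;=\; \gamma_2\,\mathcal{H}_2^{(\alpha)}(x) + c\,\gamma_1\,\mathcal{H}_1^{(\alpha)}(x) \;=\; \gamma_2 x^2 + 2\sqrt{d}\,\gamma_1 x + d\,\gamma_2
\]
must also have only real zeros. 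Its discriminant equals $4d(\gamma_1^2 - \gamma_2^2)$, so non-negativity of this discriminant, together with non-negativity of the terms of $\seq{\gamma_k}$, forces $\gamma_2 \leq \gamma_1$.

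Two degenerate sub-cases warrant a brief check: if $\gamma_2 = 0$ the base inequality is automatic, and if $\gamma_1 = 0$ while $\gamma_2 > 0$ then $\tilde{p}(x) = \gamma_2(x^2 + d)$ has no real zeros (since $d > 0$), contradicting the multiplier-sequence property and forcing $\gamma_2 = 0$ as well. The crux of the argument is the selection of the test polynomial $p$: the constant term of $\mathcal{H}_2^{(\alpha)}$ becomes positive precisely when $\alpha$ crosses below zero, and this sign change is exactly what yields a discriminant in $\tilde{p}$ comparing $\gamma_1^2$ against $\gamma_2^2$ with the direction needed for non-increasing monotonicity --- in contrast to the non-decreasing conclusion of Theorem \ref{Hermiteallcontain} which arose from the opposite sign available when $\alpha > 0$.
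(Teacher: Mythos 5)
Your proof is correct, and it reaches the key base inequality by a slightly different route than the paper. Both arguments share the same skeleton: exhibit one index at which the sequence fails to increase, then invoke Lemma \ref{noninc} (legitimately, since every $\mathcal{H}^{(\alpha)}$-multiplier sequence is a classical one by the discussion opening Section \ref{partitions}) to propagate non-increase to all later indices. The difference is in how the base inequality is obtained. The paper applies $\Gamma$ to $x^2=\mathcal{H}^{(\alpha)}_2+\alpha \mathcal{H}^{(\alpha)}_0$, which only yields $\gamma_0\geq\gamma_2$; it then needs an additional step --- ruling out a strict interior minimum at $\gamma_1$, so that either $\gamma_1\geq\gamma_0\geq\gamma_2$ or $\gamma_0\geq\gamma_1\geq\gamma_2$ --- to conclude $\gamma_1\geq\gamma_2$. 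You instead test the polynomial $\mathcal{H}^{(\alpha)}_2+2\sqrt{d}\,\mathcal{H}^{(\alpha)}_1=(x+\sqrt{d})^2$ with $d=-\alpha>0$, whose image under $\Gamma$ has discriminant $4d(\gamma_1^2-\gamma_2^2)$, giving $\gamma_1\geq\gamma_2$ in one stroke; your handling of the degenerate cases $\gamma_2=0$ and $\gamma_1=0$ is also sound. What your choice of test polynomial buys is the elimination of the paper's intermediate ordering argument (which itself quietly reuses Lemma \ref{noninc} or Tur\'an's inequality), at the modest cost of relying on the explicit forms $\mathcal{H}^{(\alpha)}_1(x)=x$ and $\mathcal{H}^{(\alpha)}_2(x)=x^2-\alpha$ --- facts the paper's own computation uses as well, so nothing extra is being assumed.
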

\begin{proof} If $\gamma_2=0$, the claim follows immediately. Suppose now that $\gamma_2 \neq 0$ and let $\Gamma=\seq{\gamma_k}$ be a sequence as in the statement of the proposition. Consider the polynomial
\[
x^2=\mathcal{H}^{(\alpha)}_2+\alpha \mathcal{H}^{(\alpha)}_0.
\]
Then 
\[
\Gamma[x^2]=\gamma_2x^2+\alpha(\gamma_0-\gamma_2)
\]
has only real zeros if and only if $\gamma_0 \geq \gamma_2$. Note that we must have either $\gamma_1\geq \gamma_0\geq \gamma_2$ or $\gamma_0\geq \gamma_1 \geq \gamma_2$. In both cases $\gamma_1\geq \gamma_2$, and hence by Lemma \ref{noninc} $\gamma_{k+1} \leq \gamma_k$ for $k\geq 1$.
\end{proof}

\begin{lem} \label{nowhere}The sequence $\Gamma=\displaystyle{\left\{\frac{1}{8}, 1, 2, 0 ,0,\ldots \right\}}$ is a classical multiplier sequence.
\end{lem}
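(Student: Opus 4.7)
The plan is to verify the Lemma by appealing directly to the classical P\'olya--Schur algebraic characterization of multiplier sequences: $\seq{\gamma_k}$ is a classical multiplier sequence if and only if for every $n \in \mathbb{N}$ the Jensen-type polynomial
\[
T_n(x)=\sum_{k=0}^n \binom{n}{k}\gamma_k x^k
\]
either vanishes identically or has only real zeros, all of the same sign. Since $\gamma_k=0$ for $k\geq 3$, only the first three terms of $\Gamma$ contribute, so for every $n \geq 2$ one obtains the explicit quadratic
\[
T_n(x)=\tfrac{1}{8}+nx+n(n-1)x^2,
\]
while $T_0(x)=1/8$ and $T_1(x)=1/8+x$ trivially satisfy the requirement.

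The verification reduces to analyzing this quadratic family. First I would compute the discriminant
\[
\Delta_n = n^2-4\cdot n(n-1)\cdot\tfrac{1}{8}=\tfrac{n(n+1)}{2},
\]
which is strictly positive for $n\geq 2$, guaranteeing two distinct real zeros. Next, using Vieta's formulas, the product of the zeros equals $\frac{1}{8n(n-1)}>0$ and their sum equals $-\frac{1}{n-1}<0$, so both zeros are negative and hence of the same sign. This is exactly the condition required by P\'olya--Schur, and the lemma follows.

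There is essentially no obstacle here beyond the observation that $\Gamma$ terminates, which collapses the Jensen polynomials to a uniform quadratic form. The only thing to keep an eye on is handling the degenerate cases $n=0,1$ separately and noting that the ``same sign'' clause of P\'olya--Schur is genuinely used (not merely reality of zeros), since $\Gamma$ has all positive terms and the associated zeros turn out to be negative.
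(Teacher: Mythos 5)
Your proposal is correct and follows essentially the same route as the paper: both apply the P\'olya--Schur criterion to reduce the problem to the quadratic $\frac{1}{8}+nx+n(n-1)x^2$ (which is exactly $\Gamma[(1+x)^n]$) and then verify that its zeros are real and negative. The only cosmetic difference is that you argue via the discriminant and Vieta's formulas, whereas the paper writes out the roots explicitly; both computations are correct.
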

\begin{proof} By the classification theorem for classical multi[lier sequences in \cite{PS}, it is enough to show that $\Gamma[(1+x)^n]$ has only real, non-positive zeros for $n \geq 1$. The result is immediate if $n=1$. For $n \geq 2$ we have that 
\[
\Gamma[(1+x)^n]=\frac{1}{8}+n x+n(n-1)x^2
\]
is a quadratic polynomial with roots
\[
r_{1,2}=\frac{-\sqrt{2n^2} \pm \sqrt{n(n+1)}}{\sqrt{2}2n (n-1)},
\]
both of which are negative. The proof is complete.
\end{proof}

\begin{prop} There does not exists a non-zero constant $\alpha$ such that the sequence $\displaystyle{\left\{\frac{1}{8}, 1, 2, 0 ,0,\ldots \right\}}$ is a $\mathcal{H}^{(\alpha)}$-multiplier sequence.
\end{prop}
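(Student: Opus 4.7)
My plan is to split the argument by the sign of $\alpha$. For $\alpha<0$ the preceding proposition does all the work: since $\Gamma$ has non-negative terms, if it were an $\mathcal{H}^{(\alpha)}$-multiplier sequence we would have $\gamma_{k+1}\le \gamma_k$ for every $k\ge 1$. This immediately fails because $\gamma_1=1<2=\gamma_2$, so no $\alpha<0$ can satisfy the hypothesis.

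For $\alpha>0$ I would invoke the footnote, which records Piotrowski's result that for every $\alpha>0$ the set of $\mathcal{H}^{(\alpha)}$-multiplier sequences equals the set of Hermite multiplier sequences. It therefore suffices to produce a single polynomial in the Hermite basis with only real zeros whose image under $\Gamma$ has a non-real zero. I would look for $p(x)=a_0 H_0(x)+a_2 H_2(x)+H_3(x)$, deliberately forcing the $H_1$-coefficient to be zero. Because $\gamma_3=0$, a direct substitution yields
\[
\Gamma[p](x)\;=\;\tfrac{a_0}{8}\,H_0(x)+2a_2\,H_2(x)\;=\;8a_2\,x^2+\Bigl(\tfrac{a_0}{8}-4a_2\Bigr),
\]
whose discriminant equals $128\,a_2^{2}-4\,a_0\,a_2=4a_2(32a_2-a_0)$. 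Hence the strategy is to pick $a_2>0$ and $a_0>32a_2$; this forces $\Gamma[p]$ to have a pair of complex conjugate zeros.

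The main obstacle is to verify that such a choice of $(a_0,a_2)$ simultaneously yields three real zeros of $p(x)=8x^3+4a_2x^2-12x+(a_0-2a_2)$. For $a_2$ small, $p$ is a perturbation of the cubic $8x^3-12x+a_0$, which has three real zeros exactly when $|a_0|\le 4\sqrt{2}$, so both constraints can be met together. The concrete choice I would carry out is $a_2=\tfrac18$ and $a_0=5$: the inequality $32a_2=4<5=a_0$ holds, a routine computation of the local extrema of $p(x)=8x^3+\tfrac12 x^2-12x+\tfrac{19}{4}$ at the roots of $p'(x)=24x^2+x-12$ shows that the local minimum is negative while the local maximum is positive (giving three real zeros), and $\Gamma[p](x)=x^2+\tfrac18$ plainly has no real zeros, completing the contradiction.
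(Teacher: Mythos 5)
Your proof is correct, and the two halves compare differently with the paper's argument. For $\alpha<0$ you do exactly what the paper does: the preceding proposition forces $\gamma_{k+1}\le\gamma_k$ for $k\ge 1$, which $\gamma_1=1<2=\gamma_2$ violates. For $\alpha>0$ the paper simply cites the fact (from Piotrowski's characterization) that every $\mathcal{H}^{(\alpha)}$-multiplier sequence of non-negative terms must be non-decreasing, which $\gamma_2=2>0=\gamma_3$ violates; you instead use only the footnote's identification of $\mathcal{H}^{(\alpha)}$-multiplier sequences with Hermite multiplier sequences and then exhibit an explicit witness, namely $p=5H_0+\tfrac18 H_2+H_3=8x^3+\tfrac12x^2-12x+\tfrac{19}{4}$, which has three real zeros while $\Gamma[p]=x^2+\tfrac18$ has none. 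I checked your computations (the expansion, the discriminant $4a_2(32a_2-a_0)$, the sign of $p$ at the critical points of $p'(x)=24x^2+x-12$, and the final image $x^2+\tfrac18$) and they are all right. The trade-off: the paper's version is two sentences but leans on the full monotonicity characterization of Hermite multiplier sequences, whereas yours is longer but more self-contained and constructive, needing only the (cited) equality of the $\mathcal{H}^{(\alpha)}$- and Hermite-multiplier-sequence classes for $\alpha>0$. Both are complete proofs.
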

\begin{proof} If $\alpha >0$, every $\mathcal{H}^{(\alpha)}$-multiplier sequence must be non-decreasing. If $\alpha<0$, then every $\mathcal{H}^{(\alpha)}$-multiplier sequence must be non-increasing after the first term. The result follows.
\end{proof}

\section{Open questions}\label{concl}
 We have obtained some first results regarding the partitioning of the set of classical multiplier sequences. Answers to the following questions would greatly enhance our understanding of this problem. \\
 1. \ Given $\alpha>0, \beta<0$, the set $\mathcal{H}^{(\alpha)}_{MS} \cup \mathcal{H}^{(\beta)}_{MS}$ is a strict subset of the set of classical multiplier sequences. There are classical multiplier sequences which `peak' (in the sense of Lemma \ref{nowhere}) in the $n^{th}$ term and are unaccounted for. Is there a (finite) collection of bases which would account for all such multiplier sequences?
 
 \smallskip
 \noindent 2. Is there a collection of infinitely many simple sets of polynomials $\set{Q_j}$such that
 \begin{itemize}
 \item[(i)] $Q_{jMS} \vartriangle Q_{iMS} \neq \emptyset$ for $j\neq i$
 \item[(ii)] $\displaystyle{\bigcup_{j=1}^{\infty} Q_{jMS} \subsetneq Q_{MS}}$
 \end{itemize}
 We know that the collection $\displaystyle{Q_j:=\left \{ 1,1+x,1+x+x^2, \ldots, \sum_{i=0}^j x^i, x^{j+1}, x^{j+2}, \ldots \right\}}$ satisfies $(ii)$, but we do not know whether it satisfies $(i)$. 
 
 \smallskip
 \noindent 3. Are there classical multiplier sequences which are not multi[lier sequences for any other simple set of polynomials? If there are, can one find `maximal' subset (with corresponding simple sets of polynomials) of the set of classical multiplier sequences?

\end{document}